\algnewcommand{\Output}{\item[\algorithmicoutput]}
\algnewcommand{\algorithmicoutput}{\textbf{Output:}}
\newtheorem{theorem}{Theorem}[section]
\newtheorem{lemma}[theorem]{Lemma}
\newtheorem{proposition}[theorem]{Proposition} 
\theoremstyle{definition}
\newtheorem{assumption}[theorem]{Assumption}
\newtheorem{problem}{Problem}
\theoremstyle{remark}
\newcommand{\oprocendsymbol}{\hbox{$\bullet$}}
\newcommand{\oprocend}{\relax\ifmmode\else\unskip\hfill\fi\oprocendsymbol}
\newcommand{\ba}{\begin{align}}
\newcommand{\ea}{\end{align}}
\DeclareMathOperator*{\argmin}{argmin}
\renewcommand{\d}[1]{\ensuremath{\operatorname{d}\!{#1}}}
\newcommand{\calC}{{\cal C}}
\newcommand{\calI}{{\cal I}}
\newcommand{\calL}{{\cal L}}
\newcommand{\calN}{{\cal N}}
\newcommand{\calP}{{\cal P}}
\newcommand{\calS}{{\cal S}}
\newcommand{\calX}{{\cal X}}
\newcommand{\bfd}{\mathbf{d}}
\newcommand{\bff}{\mathbf{f}}
\newcommand{\bfg}{\mathbf{g}}
\newcommand{\bfh}{\mathbf{h}}
\newcommand{\bfn}{\mathbf{n}}
\newcommand{\bfu}{\mathbf{u}}
\newcommand{\bfx}{\mathbf{x}}
\newcommand{\bfy}{\mathbf{y}}
\newcommand{\bfz}{\mathbf{z}}
\newcommand{\bfphi}{\boldsymbol{\phi}}
\newcommand{\bfxi}{\boldsymbol{\xi}}
\newcommand{\bbI}{\mathbb{I}}
\newcommand{\bbR}{\mathbb{R}}
\title{\LARGE \bf Constrained Variational Inference via Safe Particle Flow }
\author{Yinzhuang Yi \quad Jorge Cort{\'e}s \quad Nikolay Atanasov {}
\thanks{We gratefully acknowledge support from ONR Award N00014-23-1-2353 and NSF FRR CAREER 2045945.}%
\thanks{The authors are with the Contextual Robotics Institute, University of California San Diego, La Jolla, CA 92093, USA (e-mails: {\tt \{yiyi,cortes,natanasov\}@ucsd.edu}).}%
}
\begin{document}
	\maketitle
	\thispagestyle{empty}
	\begin{abstract}
	We propose a control barrier function (CBF) formulation for enforcing equality and inequality constraints in variational inference. The key idea is to define a barrier functional on the space of probability density functions that encode the desired constraints imposed on the variational density. By leveraging the Liouville equation, we establish a connection between the time derivative of the variational density and the particle drift, which enables the systematic construction of corresponding CBFs associated to the particle drift. Enforcing these CBFs gives rise to the safe particle flow and ensures that the variational density satisfies the original constraints imposed by the barrier functional. This formulation provides a principled and computationally tractable solution to constrained variational inference, with theoretical guarantees of constraint satisfaction. The effectiveness of the method is demonstrated through numerical simulations.
	\end{abstract}

    \begin{IEEEkeywords}
    Estimation, Variational methods, Safety-critical control
    \end{IEEEkeywords}

	\section{Introduction}
\IEEEPARstart{B}{ayesian} inference plays a key role in a variety of applications, including statistical learning \cite{khan2023bayesian}, estimation theory \cite{barfoot2017state}, and motion planning \cite{mukadam2018continuous}. In Bayesian inference problems, we start with a prior probability density function (PDF) $p(\bfx)$, and given an observation $\bfz$ and associated likelihood PDF $p(\bfz | \bfx)$, we aim to compute a posterior PDF $p(\bfx|\bfz)$, following Bayes' rule. Traditional approaches to Bayesian inference include the Kalman filter \cite{kalman1960new}, which relies on linear Gaussian assumptions, and its extension to nonlinear observation and transition models, the extended Kalman filter (EKF) \cite{anderson2005optimal}. As an alternative, particle filters \cite{gordon1993novel} and sequential Monte Carlo methods \cite{neal2011mcmc} approximate the posterior using weighted particles instead of a parametric density.

Variational inference (VI) \cite{jordan1999introduction} is a formulation of Bayesian inference as an optimization problem with Kullback–Leibler (KL) divergence between a variational density $q(\bfx)$ and the posterior density $p(\bfx | \bfz)$ as the objective. Many effective methods~\cite{bishop2006pattern} exist for VI. A particularly effective approach is particle-based VI \cite{liu2016stein, daum2007nonlinear, geffner2023langevin}, where the variational density is approximated by a finite set of particles that evolve according to a particle drift function. Examples of such methods include the Stein variational gradient descent \cite{liu2016stein}, the particle flow particle filter \cite{daum2007nonlinear}, and diffusion-based VI \cite{geffner2023langevin}.

In this paper, we consider a constrained VI problem where the posterior density must satisfy certain conditions, e.g., manifold constraints on orientation in robot state estimation \cite{lee2019space}. Constrained VI can be approached by requiring particle samples from the variational density to satisfy the constraints. Methods using this approach typically modify the particle drift to ensure constraint satisfaction \cite{heiden2022probabilistic, zhang2022sampling, power2024constrained}. For instance, equality constraints have been addressed using projection methods \cite{zhang2022sampling} and Lagrange multiplier formulations \cite{heiden2022probabilistic}. The projection method \cite{zhang2022sampling} is extended in \cite{power2024constrained} to handle multiple equality and inequality constraints. However, the inclusion of inequality constraints relies on slack variables, which leads to poor constraint satisfaction in practice. Instead of altering an established particle drift, one can also derive particle drift using a penalized objective that encodes the prescribed constraints \cite{tabor2025csvd, gurbuzbalaban2024penalized}. The penalized objective can be constructed either by augmenting the KL divergence with an additional penalty term \cite{tabor2025csvd} or by modifying the posterior density \cite{tabor2025csvd, gurbuzbalaban2024penalized}. A hybrid approach has been proposed in \cite{craft2024nonlinear}, where a particle drift is derived from a modified posterior that encodes inequality constraints, and subsequently adjusted to enforce equality constraints through a projection method similar to \cite{zhang2022sampling}. Nonetheless, penalization methods lack guarantees on exact constraint satisfaction. Moreover, all aforementioned methods enforce constraints only on individual particles and do not formally establish whether particle-wise constraint satisfaction guarantees variational-density-wise constraint satisfaction. To address this limitation, Chamon et al. \cite{chamon2024constrained} formulate a primal-dual approach, where the variational density and the dual variables associated with the constraints are updated simultaneously following the steepest descent/ascent direction of the Lagrangian under the Wasserstein metric. This method provides theoretical guarantees on the variational density asymptotically satisfying chance constraints on the expectations of the constraint functions, which is different from the deterministic constraints on the density function support studied here.

Our objective is to preserve the simplicity of modifying a desired particle drift while providing guarantees on constraint satisfaction for the variational density.  This motivates using control barrier functions (CBFs), which provide a rigorous approach to enforce constraints on the evolution of a control system~\cite{ames2017clfcbfqp, jorege2024flow}.  The safe particle flow introduced here can be interpreted as a minimally modified instance of the gradient flow of the KL divergence in the space of PDFs.  Recent developments in the CBF literature introduce the safe gradient flow~\cite{jorege2024flow}, demonstrating the effectiveness of CBFs for enforcing constraints along the gradient flow dynamics in nonlinear optimization.  CBF techniques have gained popularity in the control community due to their simplicity and formal guarantees for constraint satisfaction.  For control-affine systems, CBF conditions take the form of linear constraints in the control input, enabling safe control synthesis via quadratic programming. A comprehensive overview of CBF techniques and their application as safety constraints in quadratic programs is provided in \cite{ames2017clfcbfqp}. These methods have been extended to functional spaces, illustrating the use of control barrier functionals for time-delayed system safety \cite{kiss2023cbfal}. Despite their success in enforcing constraints within control systems, CBF techniques have not been used to enforce constraints in VI.

We propose a methodology that leverages CBF techniques to incorporate inequality constraints in VI in three key steps: (1) a barrier functional encodes the desired constraints on the variational density, yielding conditions on its time derivative. These are challenging to work with directly due to the infinite-dimensional nature of the density space; (2) the Liouville equation \cite{wibisono2017information} relates the time derivative of the variational density to the deterministic particle drift, allowing us to instead formulate corresponding CBF constraints on the particle drift; (3) satisfying the CBF constraints on the deterministic particle drift guarantees that the variational density flow also satisfies the barrier constraints. The deterministic particle drift is essential for the validity of CBF techniques. The CBF-based flow formulation offers an anytime algorithm that ensures constraint satisfaction at any time during the particle evolution, unlike Lagrange multiplier–based methods that only guarantee asymptotic constraint satisfaction. Our method provides a principled and computationally tractable way of ensuring density-wise constraint satisfaction, thereby establishing a rigorous connection between safe particle flow and constrained VI.

    \section{Problem Statement}\label{sec: prob}
Consider a Bayesian inference problem where $\bfx \in \calX \subset \bbR^{n}$ is a random variable of interest with prior PDF $p(\bfx)$. Given a measurement $\bfz \in \bbR^m$ with likelihood PDF $p(\bfz | \bfx)$, the posterior PDF of $\bfx$ conditioned on $\bfz$ is determined by Bayes' theorem \cite{bayes1763}: 
\begin{equation}
\label{prelim: bayes_rule}
    p(\bfx | \bfz) = \frac{p(\bfz | \bfx) p(\bfx)}{p(\bfz)},
\end{equation}
where $p(\bfz)$ is the marginal measurement PDF, computed as $p(\bfz) = \int_{\calX} p(\bfz | \bfx) p(\bfx) \d \bfx$. Computing the posterior PDF in \eqref{prelim: bayes_rule} is often intractable because $p(\bfz)$ may not have a closed form, except when the prior $p(\bfx)$ and the likelihood $p(\bfz | \bfx)$ are a conjugate pair. To calculate the posterior of non-conjugate prior and likelihood, approximation methods are needed.

This problem can be approached using VI methods \cite[Ch.~10]{bishop2006pattern}, which approximate the posterior $p(\bfx | \bfz)$ in \eqref{prelim: bayes_rule} using a PDF $q(\bfx)$ with tractable expression, termed variational density. To perform the approximation, VI minimizes the KL divergence between the true posterior $p(\bfx | \bfz)$ and the variational density $q(\bfx)$:
\begin{equation}
\label{prelim: kld}
    D_{KL} \big( q(\bfx) \| p(\bfx | \bfz) \big) = \int_{\calX} q(\bfx) \log \bigg(\frac{q(\bfx)}{p(\bfx | \bfz)} \bigg) \d\bfx.
\end{equation}
The variational density $q(\bfx)$ can be represented as a collection of weighted particles \cite{liu2016stein}, leading to the particle-based VI considered in this paper.

This paper considers a constrained VI problem with \textit{support constraints}: the variational density is restricted to take nonzero values only within a designated set, which we term safe. The set $\calS$ is defined as the intersection of $N$ inequality-constrained sets, each specified implicitly by a continuously differentiable function $g_i: \calX \to \bbR$:
\begin{equation}
    \calS = \bigcap_{i \in \calI} \calS_i, \quad \calS_i = \{ \bfx \in \calX \;|\; g_i(\bfx) \geq 0 \} ,
\end{equation}
where $\calI = \{1, \dots, N\}$. Equality constraints specified by $g^e(\bfx)$ can be equivalently represented by a pair of inequality constraints $g^e(\bfx) = 0 \Leftrightarrow [g^e(\bfx), - g^e(\bfx)]^{\top} \geq \bf0$. The safe set can encode, for instance, geometric constraints, such as manifold constraints in robot state estimation. We want to find a variational density that matches the Bayes' posterior as much as possible while satisfying the safety constraint $q(\bfx) = 0$, for all $\bfx \in \calX \setminus \calS$. This constraint is essential to ensure that the density is supported on the safe set. We make the following assumptions throughout the paper.
\begin{assumption}[Feasibility]
\label{assum: state_safe_separation}
    The state space $\calX \subset \bbR^n$ is bounded and the safe set $\calS$ is nonempty.
\end{assumption}
The above assumption ensures the existence of a feasible solution to the constrained VI problem. In addition, we make the following assumption on the variational densities.

\begin{assumption}[Variational Density Family]
\label{assum: valide_densities}
  The variational density $q(\bfx)$ belongs to the family $\calP = \{ p(\bfx) \in \calL^1(\calX) \mid  \int_{\calX} p(\bfx) \d \bfx = 1, \; p(\bfx) \geq 0 \}$, where $\calL^1(\calX)$ is the space of absolutely integrable functions on $\calX$ with respect to the Lebesgue measure.
\end{assumption}

Assumption~\ref{assum: valide_densities} ensures that the variational densities are valid PDFs, supported on the state space $\calX$. We formulate the constrained VI problem as follows.

\begin{problem}\label{pr:opt}
    Find a variational density $q(\bfx)$ that solves the optimization problem:
    \begin{equation}
    \label{prob: cons_bayes_infer_functional}
	   \begin{aligned}
		\min_{q(\bfx) \in \calP} \, &\, D_{KL} \big( q(\bfx) \| p(\bfx | \bfz) \big)  \\
		\mathrm{s.t.} \, &\, \int_{\calX \setminus \calS} q(\bfx) \d \bfx = 0. 
	\end{aligned}
    \end{equation}
    The constraint ensures that $q(\bfx)$ has support strictly on $\calS$.
\end{problem}

    \section{Variational Inference Using Particle Flow}

Before considering the constrained formulation in Problem~\ref{pr:opt}, we review a gradient flow method for solving unconstrained VI problems \cite{chen2023gradient}. In this approach, an initial guess for the variational density is modified by following the steepest descent direction of the KL divergence functional. This yields a continuous-time trajectory $q(\bfx;t)$ whose asymptotic limit is a solution to the VI problem.

\subsection{Gradient Flow} \label{sec: gradient_flow}
The tangent space $T_q\calP$ of the density family $\calP$ at a PDF $q \in \calP$ is~\cite{chen2023gradient}:
\begin{equation}
    T_q \calP = \Big \{ \sigma(\bfx) \in \calL^{1}(\calX) \mid \int \sigma(\bfx) \d \bfx = 0 \Big \}. 
\end{equation}
The cotangent space $T^*_q\calP$ is the dual of $T_{q}\calP$. We can introduce a bilinear map $\langle \cdot, \cdot \rangle_{\calP}$ as the duality pairing $T^*_q\calP \times T_{q}\calP \to \bbR$. For any $\psi \in T^*_q\calP$ and $\sigma \in T_{q}\calP$, the pairing can be identified as $\langle \psi, \sigma \rangle_{\calP} = \int_{\calX} \psi(\bfx) \sigma(\bfx) \d \bfx$. The first variation of the KL divergence $\frac{\delta D_{KL}(q || p)}{\delta q}$ with respect to $q \in \calP$ is an element of the cotangent space $T^*_q\calP$, given by $\frac{\delta D_{KL}(q || p)}{\delta q} = 1 + \log \frac{q}{p}$. Given a metric tensor at $q$, denoted by $M(q) : T_{q}\calP \to T^*_q\calP$, we can express the Riemannian metric $g_q : T_{q}\calP \times T_{q}\calP \to \bbR$ as $g_q(\sigma_1, \sigma_2) = \langle M(q)\sigma_1, \sigma_2 \rangle_{\calP}$. The gradient of the KL divergence under the Riemannian metric, denoted by $\nabla_{q}D_{KL}(q || p)$, is defined as: $g_q(\nabla_{q}D_{KL}(q || p), \sigma) = \bigl \langle \frac{\delta D_{KL}(q || p)}{\delta q}, \sigma \bigr \rangle_{\calP}$, for any $\sigma \in T_{q}\calP$. Using the metric tensor, we can write $\nabla_{q}D_{KL}(q || p) = M^{-1}(q) \frac{\delta D_{KL}(q || p)}{\delta q}$. The gradient flow of the KL divergence with respect to this metric is:
\begin{equation}\label{prelim: gradient_flow}
    \frac{\partial q(\bfx; t)}{\partial t} =  - \nabla_{q}D_{KL}(q || p) \big|_{q = q(\bfx; t)}.
\end{equation}
The convergence of the gradient flow to the optimum depends on the choice of Riemannian metric. For a detailed analysis of convergence properties under different metrics, we refer the reader to~\cite{chen2023gradient}. However, implementing the gradient flow directly is challenging because it is defined over the infinite-dimensional space of PDFs. To address this, the variational density $q(\bfx;t)$ can be represented by a finite set of samples drawn from it, referred to as \emph{particles}, and its evolution can be characterized through the evolution of these particles. The connection between the particle evolution and the gradient flow dynamics is formalized by the Liouville equation \cite{wibisono2017information}, which we review next.

\subsection{Liouville Equation and Particle Flow}\label{sec: liouville_pf}
Consider a random process $\bfx(t) \in \bbR^n$ governed by the ordinary differential equation (ODE):
\begin{equation} \label{prelim: particle_dynamics}
    \frac{\d \bfx(t)}{\d t} = \bfphi(\bfx(t), t),
\end{equation}
where $\bfphi(\bfx(t), t)$ is the particle drift. Then, the PDF $q(\bfx; t)$ of $\bfx(t)$ evolves according to the Liouville equation \cite{wibisono2017information}: 
\begin{equation}
\label{prelim: liouville}
	\frac{\partial q(\bfx; t)}{\partial t} = - \nabla_{\bfx} \cdot \big( q(\bfx; t) \bfphi(\bfx, t) \big), 
\end{equation}
where $\nabla_{\bfx} \cdot $ denotes the divergence operator. The Liouville equation \eqref{prelim: liouville} establishes a mapping between the functional gradient in density space \eqref{prelim: gradient_flow} and the particle drift $\bfphi(\bfx(t), t)$. Since the state space $\calX$ considered in this paper is a subset of $\bbR^n$, we impose the following assumption to ensure that the Liouville equation holds in our setting.

\begin{assumption}[Conservation of Probability Mass] \label{assum: mass_conservation}
    On the boundary of the state space, the particle drift $\bfphi(\bfx, t)$ satisfies $\langle \bfphi(\bfx,t), \hat{\bfn}(\bfx) \rangle_{\bbR^n} = 0$, for all $\bfx \in \partial \calX$, where $\langle \cdot, \cdot \rangle_{\bbR^n}$ denotes the standard Euclidean inner product on $\bbR^n$ and $\hat{\bfn}(\bfx)$ is the outward unit normal vector to  $\partial \calX$ at $\bfx$.
\end{assumption}

This assumption ensures~\cite{gardiner2009stochastic} that the trajectories $p(\bfx; t)$ governed by the Liouville equation \eqref{prelim: liouville} remain in $\calP$. Note that the particle drift must remain deterministic given our reliance on CBF techniques to ensure safety.

Now, consider a particle $\bfx(t)$ sampled from the variational density $q(\bfx; t)$, whose evolution follows the gradient flow dynamics in \eqref{prelim: gradient_flow}. The Liouville equation \eqref{prelim: liouville} is used to derive a corresponding particle drift $\bfphi(\bfx, t)$ that induces the desired gradient flow for the variational density $q(\bfx; t)$, by solving:
\begin{equation} \label{sspf: particle_flow}
    \nabla_{q}D_{KL}(q || p) \big|_{q = q(\bfx; t)} = \nabla_{\bfx} \cdot \big( q(\bfx; t) \bfphi(\bfx, t) \big).
\end{equation}
The evolution of the particle is then governed by \eqref{prelim: particle_dynamics}, referred to as the \emph{particle flow}. In this paper, we focus on the particle flow derived using the Stein Riemannian metric, originally introduced in \cite{liu2016stein}. Specifically, we consider the inverse metric tensor satisfying \cite{chen2023gradient}
\begin{equation}
    M^{-1}(q) \psi = - \nabla_{\bfx} \cdot \left( q(\bfx) \int_{\calX} k(\bfx, \bfxi) q(\bfxi) \nabla_{\bfxi} \psi(\bfxi) \d \bfxi \right), 
\end{equation}
where $\psi \in T^*_{q}\calP$ with $k(\cdot, \cdot)$ denotes a positive definite kernel. The particle drift is obtained by solving \eqref{sspf: particle_flow}:
\begin{align}
    &\bfphi_{d}(\bfx, t) = - \int_{\calX} k(\bfx, \bfxi) q(\bfxi; t) \nabla_{\bfxi} \log \left( \frac{q(\bfxi; t)}{p(\bfxi, \bfz)} \right) \d \bfxi \notag\\
    &= \int_{\calX} q(\bfxi; t) \left( \nabla_{\bfxi} k(\bfx, \bfxi) + k(\bfx, \bfxi) \nabla_{\bfxi} \log p(\bfxi, \bfz) \right) \d \bfxi \label{sspf: stein_flow}\\
    &\approx \frac{1}{M} \sum_{j=1}^{M} \Big( \nabla_{\bfxi} k(\bfxi, \bfx) + k(\bfxi, \bfx) \nabla_{\bfxi} \log p(\bfxi, \bfz) \Big) \bigg|_{\bfxi = \bfx_j(t)}, \notag
\end{align}
where $\{ \bfx_j(t) \}_{j=1}^M \sim q(\bfx; t)$ are particles sampled from the variational density at time $t$. The second equality follows from integration by parts, while the last step uses Monte Carlo integration to approximate the expectation.

    \section{Safe Particle Flow}
The gradient flow of the KL divergence in~\eqref{prelim: gradient_flow} does not take the constraint in the optimization \eqref{prob: cons_bayes_infer_functional} into consideration. Directly modifying the gradient flow dynamics is challenging because of the infinite-dimensional nature of the space of PDFs and the difficulty of enforcing the constraint.

Our approach to deal with Problem~\ref{pr:opt} exploits the connection between the gradient flow dynamics and the particle drift established by the Liouville equation \eqref{sspf: particle_flow}. In the forthcoming discussion, we first introduce a barrier functional in the space of PDFs and use it to formulate constraints on the variational density flow so that its continuous-time trajectory satisfies the constraint in \eqref{prelim: gradient_flow} at all times. Using the Liouville equation, we then translate the constraints imposed on the variational density into equivalent constraints on the particle drift. Finally, we construct a safe particle drift by modifying a desired drift obtained from the unconstrained VI problem.

\subsection{Barrier Functions}
We first review barrier functions in finite-dimensional vector spaces~\cite{blanchini1999set_invariance, prajna2004safety}. Consider an autonomous system $\dot{\bfy} = \bff(\bfy)$ in $\bbR^n$ with state trajectories denoted by $\bfy(t)$. The safety of the system can be certified by ensuring that the trajectories remain within a safe set $\calC \subset \bbR^n$. This is equivalent to showing that $\calC$ is forward-invariant~\cite{blanchini1999set_invariance}. 

To establish forward invariance, we introduce a continuously differentiable function $b: \bbR^n \to \bbR$ that encodes the safe set as its zero-superlevel set, $\calC = \{ \bfy \in \bbR^n \mid b(\bfy) \geq 0 \}$. Such a function is called a \emph{barrier function}. A sufficient condition for the forward invariance of $\calC$ is that the barrier function satisfies a differential inequality along system trajectories: $\frac{\d b(\bfy(t))}{\d t} + \alpha_b b(\bfx) \geq 0$, where $\alpha_b > 0$ is a positive constant. This ensures forward invariance of the safe set \cite[Theorem~3.1]{blanchini1999set_invariance}.

\subsection{Barrier Functional Construction}
Since the variational density evolves in the space of PDFs, rather than in a finite-dimensional vector space, we rely on the concept of \emph{barrier functional}~\cite{kiss2023cbfal} to extend the forward invariance property to PDF trajectories. Based on the constraint in \eqref{prob: cons_bayes_infer_functional}, the set of feasible densities is:
\begin{equation}
\label{prob: variational_safe_set}
    \calP_{s} = \{ p(\bfx) \in \calP \;|\; \int_{\calX \setminus \calS} p(\bfx) \d \bfx = 0 \}.
\end{equation}
We define the barrier functional $\bfh: \calP \to \mathbb{R}^N$, whose $i$th component is given by
\begin{equation}
\label{sspf: variational_cbf}
    h_i(q(\bfx; t)) = -\int_{\calX \setminus \calS_i} g_i(\bfx) q(\bfx; t) \d \bfx, \ i \in \calI.
\end{equation}
We show next that the zero-level set of the barrier functional coincides with the set of feasible densities defined in~\eqref{prob: variational_safe_set}.
\begin{lemma}[Consistent Barrier Functional]\label{lem: consis_bfal}
    The zero-level set of the barrier functional introduced in \eqref{sspf: variational_cbf} satisfies $\{ p(\bfx) \in \calP \;|\; \bfh(p(\bfx)) = \mathbf{0} \} = \calP_s$.
\end{lemma}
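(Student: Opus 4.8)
The plan is to characterize both sets through a single pointwise condition, namely that $q$ vanishes almost everywhere on $\calX \setminus \calS$, and then show that each of the two sets equals the collection of densities satisfying this condition.

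First I would examine the sign structure of each integrand in \eqref{sspf: variational_cbf}. By definition $\calX \setminus \calS_i = \{\bfx \in \calX \mid g_i(\bfx) < 0\}$, so $-g_i(\bfx) > 0$ strictly on the domain of integration; continuity of $g_i$ ensures this set is measurable. Together with the nonnegativity $q(\bfx) \geq 0$ guaranteed by Assumption~\ref{assum: valide_densities}, this makes the integrand $-g_i(\bfx) q(\bfx; t)$ nonnegative on $\calX \setminus \calS_i$. Consequently $h_i(q) \geq 0$ for every $i \in \calI$, and since a nonnegative integrable function has zero integral if and only if it vanishes almost everywhere, $h_i(q) = 0$ holds precisely when $q(\bfx) = 0$ for almost every $\bfx \in \calX \setminus \calS_i$. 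The strict positivity of $-g_i$ is exactly what lets me conclude that $q$ itself, and not merely the product $g_i q$, vanishes a.e.

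Next I would assemble the componentwise conditions. The requirement $\bfh(q) = \mathbf{0}$ is equivalent to $h_i(q) = 0$ for all $i \in \calI$, which by the previous step means $q$ vanishes a.e. on each $\calX \setminus \calS_i$, hence a.e. on the union $\bigcup_{i \in \calI}(\calX \setminus \calS_i)$. By De Morgan's law this union equals $\calX \setminus \bigcap_{i \in \calI} \calS_i = \calX \setminus \calS$. Therefore $\bfh(q) = \mathbf{0}$ if and only if $q = 0$ a.e. on $\calX \setminus \calS$.

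Finally I would connect this to $\calP_s$. The defining constraint $\int_{\calX \setminus \calS} q(\bfx)\d\bfx = 0$ in \eqref{prob: variational_safe_set}, combined again with $q \geq 0$, is likewise equivalent to $q = 0$ a.e. on $\calX \setminus \calS$ by the same nonnegative-integrand argument. Since membership in $\{p \in \calP \mid \bfh(p) = \mathbf{0}\}$ and membership in $\calP_s$ are both equivalent to this single pointwise condition, the two sets coincide, which is the claim. The only real subtlety is the measure-theoretic bookkeeping: I must invoke the strict sign $g_i < 0$ on $\calX \setminus \calS_i$ so that vanishing of the weighted integral forces $q$ to vanish a.e., and I should note that the a.e. identifications are harmless because elements of $\calP$ are understood as equivalence classes in $\calL^1(\calX)$.
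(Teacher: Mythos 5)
Your proof is correct and follows essentially the same route as the paper's: the strict negativity of $g_i$ on $\calX \setminus \calS_i$ makes each integrand nonnegative, so $h_i(q)=0$ forces $q$ to vanish (a.e.) off $\calS_i$, and De Morgan's law assembles the components into the condition defining $\calP_s$. Your write-up is in fact slightly more complete than the paper's, which only argues the inclusion $\{ p \in \calP \mid \bfh(p) = \mathbf{0}\} \subseteq \calP_s$ explicitly and glosses over both the reverse inclusion and the almost-everywhere caveat that you correctly flag.
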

\begin{proof}
    Based on the definition of the constraint set $\calS_i$, $g_i(\bfx) < 0$ for all $\bfx \in \calX \setminus \calS_i$. Consequently, $-g_i(\bfx) p(\bfx) \geq 0$ for all $\bfx \in \calX \setminus \calS_i$. Given the definition in~\eqref{sspf: variational_cbf}, the condition $h_i(p(\bfx)) = 0$ implies that $p(\bfx) = 0$ for all $\bfx \in \calX \setminus \calS_i$. Therefore, if $\bfh(p(\bfx)) = \mathbf{0}$, then $p(\bfx) = 0$ for all $\bfx \in \bigcup_{i \in \calI} \left( \calX \setminus \calS_i \right) = \calX \setminus \bigcap_{i \in \calI} \calS_i = \calX \setminus \calS$. Hence, we obtain $\int_{\calX \setminus \calS} p(\bfx)\, \d \bfx = 0$.
\end{proof}

Based on Lemma~\ref{lem: consis_bfal}, ensuring forward invariance of $\calP_s$ is equivalent to ensuring $\bfh(p) = \mathbf{0}$. Since $\bfh(p) \geq \mathbf{0}$ for all $p \in \calP$ by definition, this is equivalent to ensuring $\bfh(p) \leq \mathbf{0}$. This yields the following barrier constraint: 
\begin{equation}
\label{sspf: variational_cbc}
    \frac{\d \bfh(q(\bfx; t))}{\d t} + \alpha_h \bfh(q(\bfx; t)) \leq \mathbf{0},
\end{equation}
where $\alpha_h > 0$ is a positive design parameter that determines the convergence rate of $q(\bfx; t)$ to the safe set boundary \cite{ames2019cbf}.

\subsection{Safe Particle Flow}
The constraint \eqref{sspf: variational_cbc} can be used to derive corresponding constraints on the particle drift $\bfphi(\bfx, t)$, hence, establishing necessary conditions for the particle drift to render the feasible density set $\calP_s$ forward invariant. 

As we show next, the conditions on the particle drift $\bfphi(\bfx, t)$ can be expressed in terms of its Euclidean inner product with the gradient of the constraint functions $\nabla_{\bfx} g_i(\bfx)$. This result facilitates our construction of a safe particle drift in the next section using CBF techniques.

\begin{theorem}[Safe Particle Flow]
    Let $\bfphi(\bfx, t)$ be a particle drift satisfying, for all $i \in \calI$,
    \begin{align} \label{sspf: dynamics_cbc}
        \int_{\calX \setminus \calS_i} \! \! \! \! q(\bfx; t) \langle \nabla_{\bfx}g_i(\bfx), \bfphi(\bfx, t) \rangle_{\bbR^n} \! \d \bfx \! \geq \! \alpha_h h_i(q(\bfx; t)).\;
    \end{align}
    Under Assumptions~\ref{assum: state_safe_separation}, \ref{assum: valide_densities} and~\ref{assum: mass_conservation}, the particle flow \eqref{prelim: particle_dynamics} ensures that the feasible density set $\calP_s$ is forward-invariant and exponentially stable.
\end{theorem}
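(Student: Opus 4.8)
The plan is to show that the pointwise-in-time condition \eqref{sspf: dynamics_cbc} on the particle drift is exactly equivalent to the barrier constraint \eqref{sspf: variational_cbc} on the variational density, and then to invoke a comparison (Gr\"onwall) argument to extract both forward invariance and exponential stability. The bridge between the particle-level condition and the density-level constraint is the Liouville equation together with integration by parts.

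First I would differentiate $h_i(q(\bfx;t))$ in time. Since $h_i(q) = -\int_{\calX \setminus \calS_i} g_i(\bfx) q(\bfx;t)\,\d\bfx$ and $g_i$ is time-independent, differentiating under the integral sign and substituting the Liouville equation \eqref{prelim: liouville} gives
\begin{equation}
    \frac{\d h_i(q(\bfx;t))}{\d t} = \int_{\calX \setminus \calS_i} g_i(\bfx)\, \nabla_{\bfx}\!\cdot\!\big(q(\bfx;t)\bfphi(\bfx,t)\big)\,\d\bfx .
\end{equation}
Next I would integrate by parts, applying the divergence theorem to the vector field $g_i q \bfphi$. This produces the bulk term $-\int_{\calX \setminus \calS_i} q \langle \nabla_{\bfx} g_i, \bfphi\rangle_{\bbR^n}\,\d\bfx$ plus a boundary integral over $\partial(\calX\setminus\calS_i)$. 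The key observation is that this boundary splits into two pieces: the portion lying on $\partial\calX$, and the portion lying on the level set $\{\bfx \mid g_i(\bfx)=0\}$ that separates $\calS_i$ from its complement. On the former, Assumption~\ref{assum: mass_conservation} forces $\langle\bfphi(\bfx,t),\hat{\bfn}(\bfx)\rangle_{\bbR^n}=0$; on the latter, the integrand carries the factor $g_i(\bfx)=0$. Hence the entire boundary contribution vanishes and
\begin{equation}
    \frac{\d h_i(q(\bfx;t))}{\d t} = -\int_{\calX \setminus \calS_i} q(\bfx;t)\,\langle \nabla_{\bfx} g_i(\bfx), \bfphi(\bfx,t)\rangle_{\bbR^n}\,\d\bfx .
\end{equation}

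With this identity in hand, condition \eqref{sspf: dynamics_cbc} is seen to be precisely $\frac{\d h_i}{\d t} \leq -\alpha_h h_i$, i.e. the $i$th component of the barrier constraint \eqref{sspf: variational_cbc}. Applying the comparison lemma componentwise yields $0 \leq h_i(q(\bfx;t)) \leq h_i(q(\bfx;0))\,e^{-\alpha_h t}$, where the lower bound is the nonnegativity of $h_i$ already noted in the proof of Lemma~\ref{lem: consis_bfal}. Forward invariance then follows immediately: if $q(\bfx;0)\in\calP_s$ then $\bfh(q(\bfx;0))=\mathbf{0}$ by Lemma~\ref{lem: consis_bfal}, so the bound forces $\bfh(q(\bfx;t))=\mathbf{0}$, whence $q(\bfx;t)\in\calP_s$ for all $t\ge 0$. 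Exponential stability follows from the same estimate, since $\|\bfh(q(\bfx;t))\|$ is a nonnegative measure of infeasibility that vanishes exactly on $\calP_s$ and decays to zero at rate $\alpha_h$.

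The main obstacle I anticipate is rigorously justifying the integration-by-parts step and the vanishing of the boundary term. This requires sufficient regularity of the set $\calX\setminus\calS_i$ (e.g. a piecewise-$C^1$ boundary so that the divergence theorem applies) and of $q$ and $\bfphi$, together with the clean decomposition of $\partial(\calX\setminus\calS_i)$ into its $\partial\calX$ part and its $\{g_i=0\}$ part. One must also confirm that differentiation under the integral sign is legitimate, which should follow from dominated-convergence-type conditions given the boundedness of $\calX$ in Assumption~\ref{assum: state_safe_separation}. Once the central identity is secured, the remaining Gr\"onwall and invariance arguments are routine.
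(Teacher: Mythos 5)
Your proposal is correct and follows essentially the same route as the paper's proof: differentiate $h_i$ along the Liouville dynamics, integrate by parts, kill the boundary term using Assumption~\ref{assum: mass_conservation} on $\partial\calX$ and the factor $g_i=0$ on the level set, and conclude with a Gr\"onwall/comparison argument. The only cosmetic differences are that the paper phrases the time derivative via the functional derivative and duality pairing rather than differentiating under the integral sign, and cites an external CBF theorem for forward invariance where you spell out the comparison argument directly.
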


\begin{proof}
    Assumption~\ref{assum: state_safe_separation} ensures that the feasible density set is nonempty. Under the regularity conditions stated in Assumption~\ref{assum: valide_densities}, the time derivative of the $i$th component of the barrier functional is given by
    \begin{equation}
    \label{sspf: time_derivative_variational_cbf}
        \frac{\d h_i(q(\bfx; t))}{\d t} = \bigl \langle \frac{\delta h_i(q(\bfx; t))}{\delta q(\bfx; t)}, \frac{\partial q(\bfx; t)}{\partial t} \bigr \rangle_{\calP}.
    \end{equation}
    Observe that each barrier functional component in \eqref{sspf: variational_cbf} can be expressed as an inner product between a scaled indicator function and the variational density: $h_i(q(\bfx; t)) = - \left \langle g_i(\bfx) \bbI_{\calX \setminus \calS_i}(\bfx), q(\bfx, t) \right \rangle_{\calP}$. By the linearity of the inner product operator, we obtain:
    \begin{equation}
    \label{sspf: functional_derivative_variational_cbf}
        \frac{\delta h_i(q(\bfx; t))}{\delta q(\bfx; t)} = -g_i(\bfx) \bbI_{\calX \setminus \calS_i}(\bfx).
    \end{equation}
    Since the variational density evolution is governed by the Liouville equation, substituting \eqref{prelim: liouville} and \eqref{sspf: functional_derivative_variational_cbf} into \eqref{sspf: time_derivative_variational_cbf} yields: $\frac{\d h_i(q(\bfx; t))}{\d t} = \int_{\calX \setminus \calS_i} g_i(\bfx) \nabla_{\bfx} \cdot \big( q(\bfx; t) \phi(\bfx, t) \big) \d \bfx$.
    %
    %
    This expression can be simplified using Green’s theorem~\cite{Duistermaat_Kolk_2004}, yielding
    \begin{align*}
        \frac{\d h_i(q(\bfx; t))}{\d t} &= \oint_{\Gamma_i} g_i(\bfx) q(\bfx; t) \left \langle \bfphi(\bfx, t), \hat{\bfn}_i(\bfx) \right \rangle_{\bbR^n}  \d \bfx \\
        & \quad - \int_{\calX \setminus \calS_i} q(\bfx; t) \langle \nabla_{\bfx}g_i(\bfx), \bfphi(\bfx, t) \rangle_{\bbR^n} \d \bfx, 
    \end{align*}
    where $\Gamma_i \coloneqq \partial (\calX \setminus \calS_i)$ and $\hat{\bfn}_i(\bfx)$ is the outward unit normal vector to the boundary $\Gamma_i$. Since the boundary $\Gamma_i$ satisfies $\Gamma_i \subseteq \partial \calX \bigcup \partial \calS_i$, and the boundary of the $i$th constraint set satisfies $\partial \calS_i \subseteq \partial \calX \bigcup \{ \bfx \in \calX | g_i(\bfx) = 0 \}$. We have $\Gamma_i \subseteq \partial \calX \bigcup \partial  \{ \bfx \in \calX | g_i(\bfx) = 0 \}$. By Assumption~\ref{assum: mass_conservation}, we have $\langle \bfphi(\bfx,t), \hat{\bfn}_i(\bfx) \rangle_{\bbR^n} = 0$, for all $\bfx \in \partial \calX$. As a result, $g_i(\bfx) \left \langle \bfphi(\bfx, t), \hat{\bfn}_i(\bfx) \right \rangle_{\bbR^n} = 0$ for all $\bfx \in \Gamma_i$, which yields $\oint_{\Gamma_i} g_i(\bfx) q(\bfx; t) \left \langle \bfphi(\bfx, t), \hat{\bfn}_i(\bfx) \right \rangle_{\bbR^n}  \d \bfx = 0$. The condition in \eqref{sspf: dynamics_cbc} follows from the component-wise form of the constraint in \eqref{sspf: variational_cbc}. Forward-invariance of the feasible density set $\calP_s$ follows from \cite[Theorem~3]{kiss2023cbfal}, while exponential stability is established via Gr\"onwall's inequality~\cite{note1919gronwall}.
\end{proof}

Having established conditions under which the particle drift renders the feasible density set $\calP_s$ forward invariant and exponentially stable, we next tackle the construction of a particle drift satisfying these conditions.

\subsection{Particle Drift Design}
Inspired by CBF methods \cite{ames2017clfcbfqp}, we construct a safe particle drift $\bfphi(\bfx, t)$ by modifying the desired Stein particle drift $\bfphi_{d}(\bfx, t)$ in \eqref{sspf: stein_flow} that solves the unconstrained VI problem. We parameterize the particle drift $\bfphi(\bfx, t)$ as follows:
\begin{equation}
\label{sspf: parameterized_particle_dynamics}
    \bfphi(\bfx, t) = \bfphi_{d}(\bfx, t) + \bfu(\bfx, t), 
\end{equation}
where $\bfu: \calX \times [0, \infty) \to \bbR^n$ is an auxiliary control term introduced to modify the desired particle drift. The control term is chosen to satisfy the following condition for all $i \in \calI$:
\begin{equation}
\label{sspf: vector_cbc}
    \nabla_{\bfx} g_i(\bfx)^{\top} \left( \bfphi_d(\bfx, t) + \bfu(\bfx, t) \right) + \alpha_{g} g_i(\bfx) \geq 0, 
\end{equation}
where $\alpha_g > 0$ is a positive constant. An input satisfying \eqref{sspf: vector_cbc} can be obtained by solving the quadratic program: 
\begin{align}
    \bfu(\bfx&, t) = \argmin_{\bfu \in \bbR^n} \,\| \bfu \|_2^2  \label{sspf: cbf_qp_u} \\
    &\mathrm{s.t.} \, \, \nabla_{\bfx} g_i(\bfx)^{\top} \left( \bfphi_d(\bfx, t) + \bfu \right) + \alpha_{g} g_i(\bfx) \geq 0, \, \forall i \in \calI.\!
\end{align}
The optimization yields a minimally invasive control input that ensures satisfaction of the constraint \eqref{sspf: vector_cbc}. However, the input is well defined contingent upon the feasibility of \eqref{sspf: cbf_qp_u}. In this formulation, each constraint function $g_i(\bfx)$ can be considered as a CBF, and stacking them yields $\bfg(\bfx) = [g_1(\bfx), g_2(\bfx), \dots, g_N(\bfx)]^{\top}$. Feasibility of \eqref{sspf: cbf_qp_u} is guaranteed if $\bfg(\bfx)$ constitutes a valid vector CBF \cite{jorege2024flow}.
\begin{proposition}[Safe Particle Control]\label{prop:safe-particle-control}
    Let $\bfg(\bfx)$ a valid vector CBF \cite[Section~II.C]{jorege2024flow}, the particle drift $\bfphi(\bfx, t)$ defined by \eqref{sspf: parameterized_particle_dynamics}, with $\bfu(\bfx, t)$ obtained by solving \eqref{sspf: cbf_qp_u}, satisfies \eqref{sspf: dynamics_cbc} with $\alpha_h = \alpha_g$.
\end{proposition}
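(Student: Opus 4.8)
The plan is to observe that the integral constraint \eqref{sspf: dynamics_cbc} is nothing but the pointwise CBF constraint \eqref{sspf: vector_cbc}, weighted by the nonnegative variational density $q(\bfx;t)$ and integrated over $\calX \setminus \calS_i$. Setting $\alpha_h = \alpha_g$ and expanding the right-hand side of \eqref{sspf: dynamics_cbc} with the definition of $h_i$ in \eqref{sspf: variational_cbf}, the target inequality is equivalent to
\[
\int_{\calX \setminus \calS_i} q(\bfx;t) \left( \langle \nabla_{\bfx} g_i(\bfx), \bfphi(\bfx,t) \rangle_{\bbR^n} + \alpha_g g_i(\bfx) \right) \d\bfx \geq 0 .
\]
Hence it suffices to show that the integrand is nonnegative and that the integral is well defined.

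First, I would invoke the valid vector CBF hypothesis to guarantee that the quadratic program \eqref{sspf: cbf_qp_u} is feasible at every $(\bfx,t)$, so that the minimizer $\bfu(\bfx,t)$ is well defined and, by construction, satisfies \eqref{sspf: vector_cbc} for all $i \in \calI$. Substituting the parameterization $\bfphi = \bfphi_d + \bfu$ from \eqref{sspf: parameterized_particle_dynamics} then yields the pointwise bound $\langle \nabla_{\bfx} g_i(\bfx), \bfphi(\bfx,t) \rangle_{\bbR^n} + \alpha_g g_i(\bfx) \geq 0$ for every $\bfx \in \calX$, i.e.\ the bracket in the displayed integrand is nonnegative everywhere.

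Second, by Assumption~\ref{assum: valide_densities} the density satisfies $q(\bfx;t) \geq 0$, so the integrand is a product of a nonnegative density and a nonnegative bracket, hence nonnegative on all of $\calX \setminus \calS_i$. Monotonicity of the integral with respect to nonnegative integrands then gives the displayed inequality, and undoing the rewriting via \eqref{sspf: variational_cbf} with $\alpha_h = \alpha_g$ recovers \eqref{sspf: dynamics_cbc} for each $i \in \calI$.

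The only substantive technical point is the role of the valid vector CBF hypothesis: it is what makes \eqref{sspf: cbf_qp_u} feasible so that $\bfu(\cdot,t)$ exists pointwise and \eqref{sspf: vector_cbc} genuinely holds, and — being the unique minimizer of a strictly convex QP with continuous data — it is measurable, so the integral over $\calX \setminus \calS_i$ is well defined and the sign argument applies. Everything beyond this reduces to the elementary fact that integrating a nonnegative function yields a nonnegative value, so no delicate estimates are needed.
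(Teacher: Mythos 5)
Your proposal is correct and follows essentially the same route as the paper: both arguments reduce \eqref{sspf: dynamics_cbc} to the pointwise CBF inequality \eqref{sspf: vector_cbc} guaranteed by the valid vector CBF hypothesis, multiply by the nonnegative density $q(\bfx;t)$, integrate over $\calX \setminus \calS_i$, and identify the right-hand side with $\alpha_g h_i(q(\bfx;t))$ via \eqref{sspf: variational_cbf}. Your added remark on measurability of the QP minimizer is a small technical refinement the paper leaves implicit, but it does not change the argument.
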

\begin{proof}
    The valid vector CBF condition guarantees the existence of a control input $\bfu(\bfx,t)$ such that, for each constraint function $g_i(\bfx)$, the particle drift satisfies $\langle \nabla_{\bfx}g_i(\bfx), \bfphi(\bfx, t) \rangle_{\bbR^n} \geq -\alpha_g g_i(\bfx)$ for all $\bfx \in \calX$. By the definition of PDFs, we can multiply both sides of the inequality by $q(\bfx; t)$ to obtain, for all $\bfx \in \calX$, $q(\bfx; t) \langle \nabla_{\bfx}g_i(\bfx), \bfphi(\bfx, t) \rangle_{\bbR^n} \geq -\alpha_g q(\bfx; t) g_i(\bfx)$.
    %
    %
    Integrating both sides over $\calX \setminus \calS_i$ preserves the inequality 
    %
    \begin{multline*}
       \int_{\calX \setminus \calS_i} q(\bfx; t) \langle \nabla_{\bfx}g_i(\bfx), \bfphi(\bfx, t) \rangle_{\bbR^n} \d \bfx \\
       \geq \alpha_g  \int_{\calX \setminus \calS_i} - q(\bfx; t) g_i(\bfx) \d \bfx. 
    \end{multline*}
    By the definition of the barrier functional \eqref{sspf: variational_cbf}, we have: $\int_{\calX \setminus \calS_i} q(\bfx; t) \langle \nabla_{\bfx}g_i(\bfx), \bfphi(\bfx, t) \rangle_{\bbR^n} \d \bfx \geq \alpha_g h_i(q(\bfx; t))$, 
    %
    %
    for all $i \in \calI$, which coincides with \eqref{sspf: dynamics_cbc} for $\alpha_h = \alpha_g$. 
\end{proof}

The key steps of the proposed method are summarized in Algorithm~\ref{alg:safe_pf}. Instead of the Stein particle drift \eqref{sspf: stein_flow}, our method can be formulated for other desired particle drifts, provided the associated gradient flow minimizes the KL divergence. Such drifts can be obtained by solving \eqref{sspf: particle_flow} with the KL divergence gradient computed under different Riemannian metrics \cite{chen2023gradient}. 

\begin{table}[t]
\centering
\caption{Runtime comparison of our safe PF, the projected PF \cite{craft2024nonlinear}, and CSVGD \cite{power2024constrained}. All methods employ the same ODE solver and integration time horizon.}\label{tab: runtime}
\begin{tabular}{|l|c|c|c|}
\hline
Method & Exec. Time (s) & Avg. It. Time (s) & Total It. \\
\hline
Safe PF (ours) & 9.922 & 0.060 & 164 \\
Projected PF & 9.558 & 0.043 & 218 \\
CSVGD & 176.293 & 0.187 & 941 \\
\hline
\end{tabular}
\end{table}

\begin{algorithm}[t]
\caption{Safe Particle Flow}
\label{alg:safe_pf}
\begin{algorithmic}[1]
\small
\Require Particles $\{\bfx_j(0)\}_{j=1}^{M}$, joint density $p(\bfx, \bfz)$, and observation $\tilde{\bfz}$
\Output Particles $\{\bfx_j(T)\}_{j=1}^{M}$ that approximate the solution to \eqref{prob: cons_bayes_infer_functional}

\Function{$\bff$}{$\{\bfx_j(t)\}_{j=1}^{M}$, $t$}
    \For{each particle \text{$\bfx_j(t)$}}
    \State $\bfphi_d(\bfx_j(t), t) \gets$ Evaluate \eqref{sspf: stein_flow} with $p(\bfx,\tilde{\bfz})$ and $\{\bfx_k(t)\}_{k=1}^{M}$ at $(\bfx_j(t),t)$
    \State $\bfu(\bfx_j(t), t) \gets$ Solve \eqref{sspf: cbf_qp_u} with $\bfphi_d(\bfx_j(t), t)$ at $(\bfx_j(t), t)$
    \State $\bfphi(\bfx_j(t), t) \gets \bfphi_d(\bfx_j(t), t) + \bfu(\bfx_j(t), t)$
    \EndFor
\State \Return $\{\bfphi(\bfx_j(t), t)\}_{j=1}^{M}$
\EndFunction

\While{ODE solver running}
    \State $\{\bfx_j(T)\}_{j=1}^{M} \gets$ SolveODE($\bff(\{\bfx_j(t)\}_{j=1}^{M}, t)$) with initialization $\left(\{\bfx_j(0)\}_{j=1}^{M}, t=0 \right)$ and termination time $T$
\EndWhile

\State \Return $\{\bfx_j(T)\}_{j=1}^{M}$ 
\end{algorithmic}
\end{algorithm}

\begin{figure*}[t]
\centering
\begin{minipage}[t]{0.6\linewidth}
    \centering
    \subcaptionbox{Projected PF~\cite{craft2024nonlinear} \label{fig: project_eq}}{\includegraphics[width=0.32\linewidth,trim=8mm 12mm 10mm 17mm, clip]{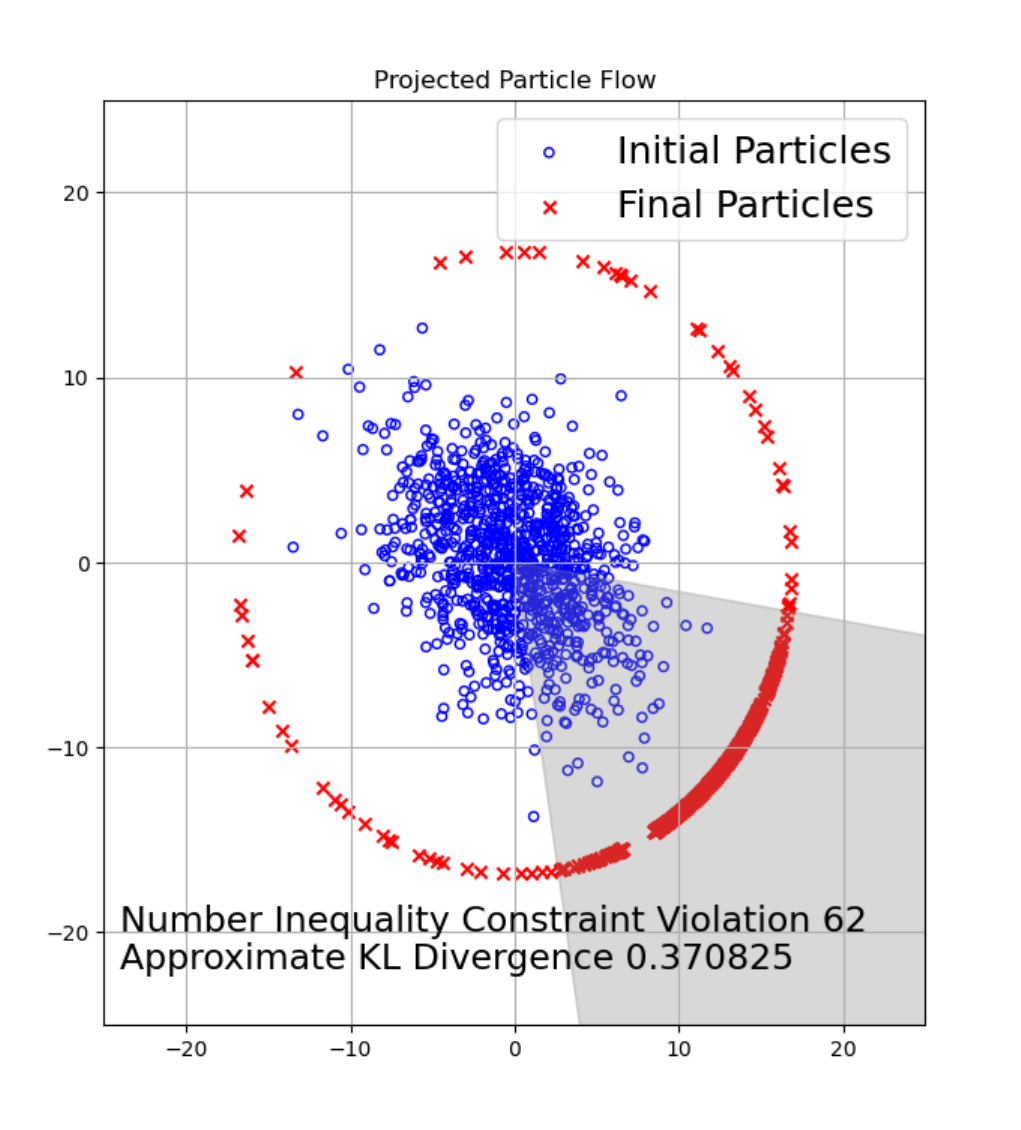}}%
    \hfill%
    \subcaptionbox{CSVGD \cite{power2024constrained} \label{fig: csvgd}}{\includegraphics[width=0.32\linewidth,trim=8mm 12mm 10mm 17mm, clip]{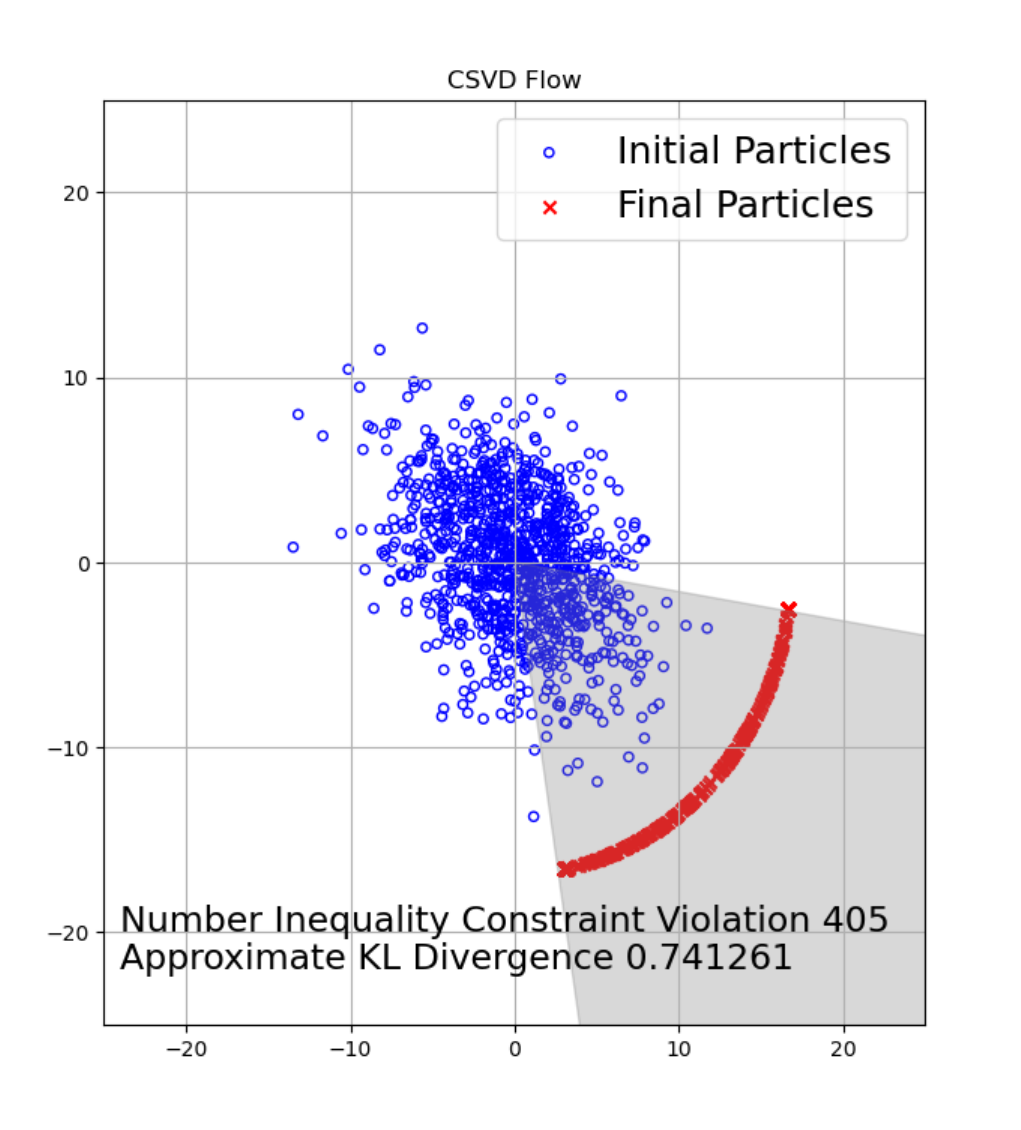}}%
    \hfill%
	\subcaptionbox{Safe PF (ours) \label{fig: safe_eq}}{\includegraphics[width=0.32\linewidth,trim=8mm 12.5mm 10mm 17mm, clip]{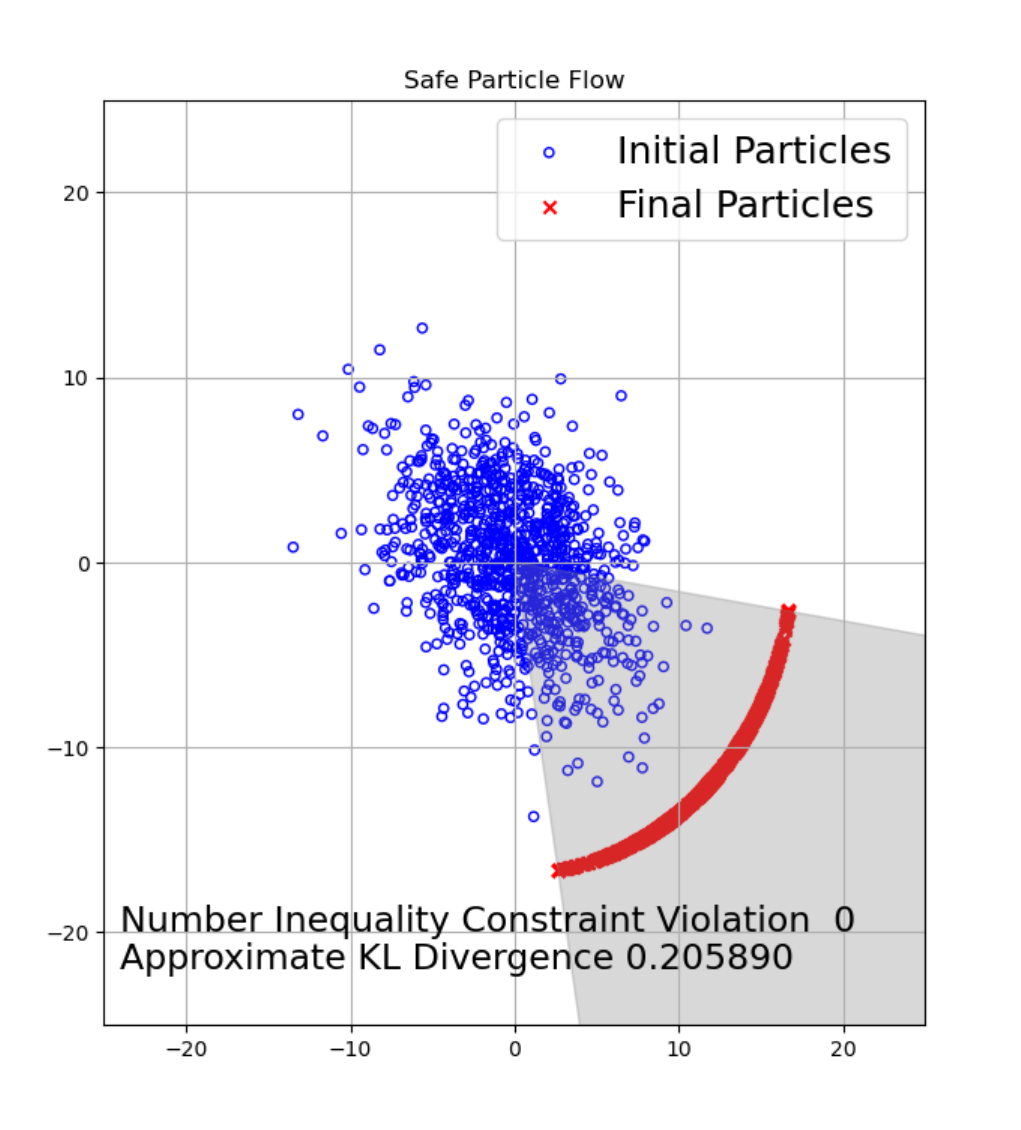}}%
    \caption{Comparison of our safe PF with CSVGD \cite{power2024constrained} and the projected PF \cite{craft2024nonlinear}. For each method, the desired particle drift is the Stein particle drift \eqref{sspf: stein_flow}. The region satisfying the inequality constraint in \eqref{eval: constraints} is shown in gray. The initial and final particles are shown as blue dots and red crosses, respectively.}
	\label{fig: compare_eq}
\end{minipage}%
\hfill%
\begin{minipage}[t]{0.38\linewidth}
    \centering
    \subcaptionbox{Projected PF~\cite{craft2024nonlinear} \label{fig: project_ineq}}{\includegraphics[width=0.5\linewidth,trim=8mm 12mm 10mm 17mm, clip]{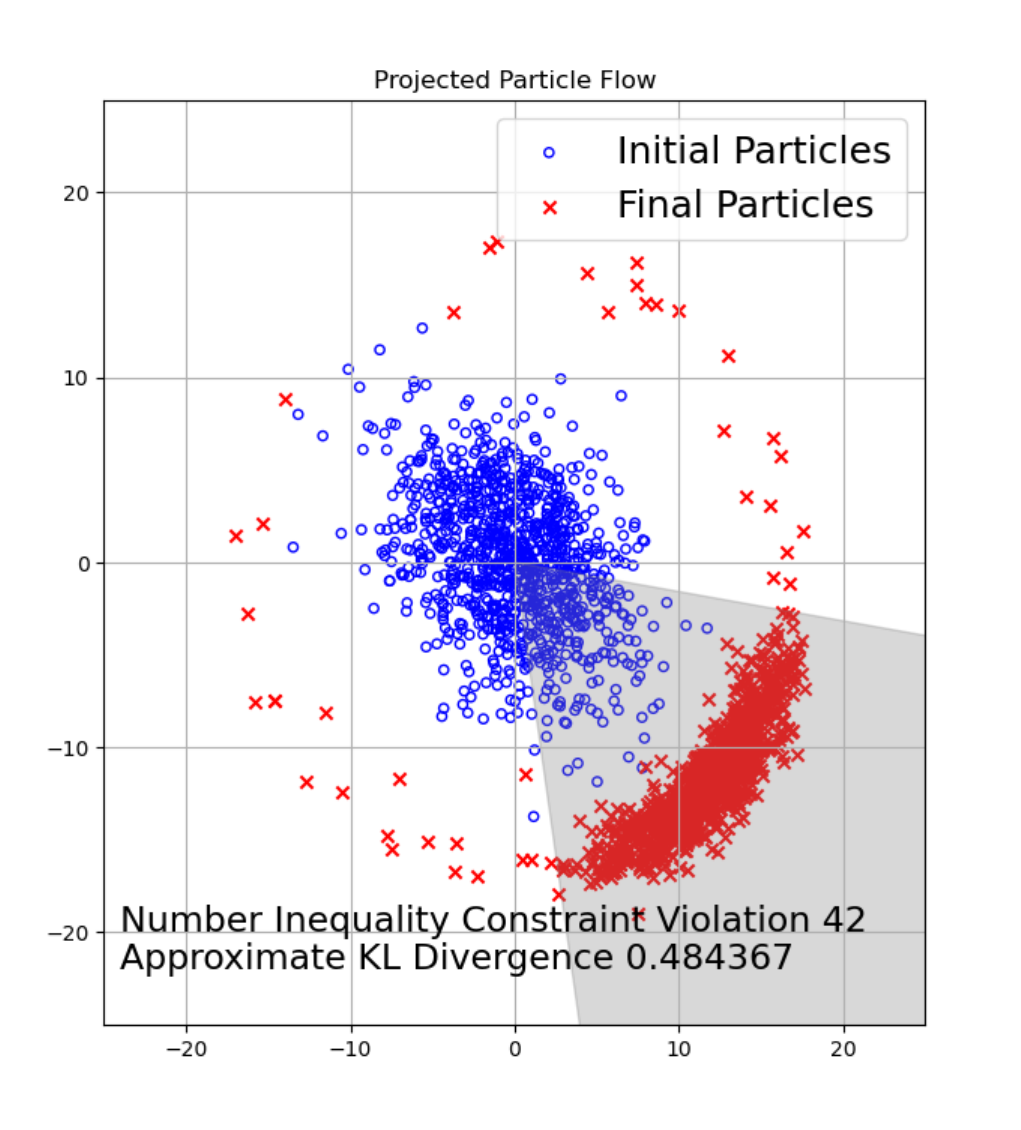}}%
    \hfill%
    \subcaptionbox{Safe PF (ours) \label{fig: safe_ineq}}{\includegraphics[width=0.5\linewidth,trim=8mm 12mm 10mm 17mm, clip]{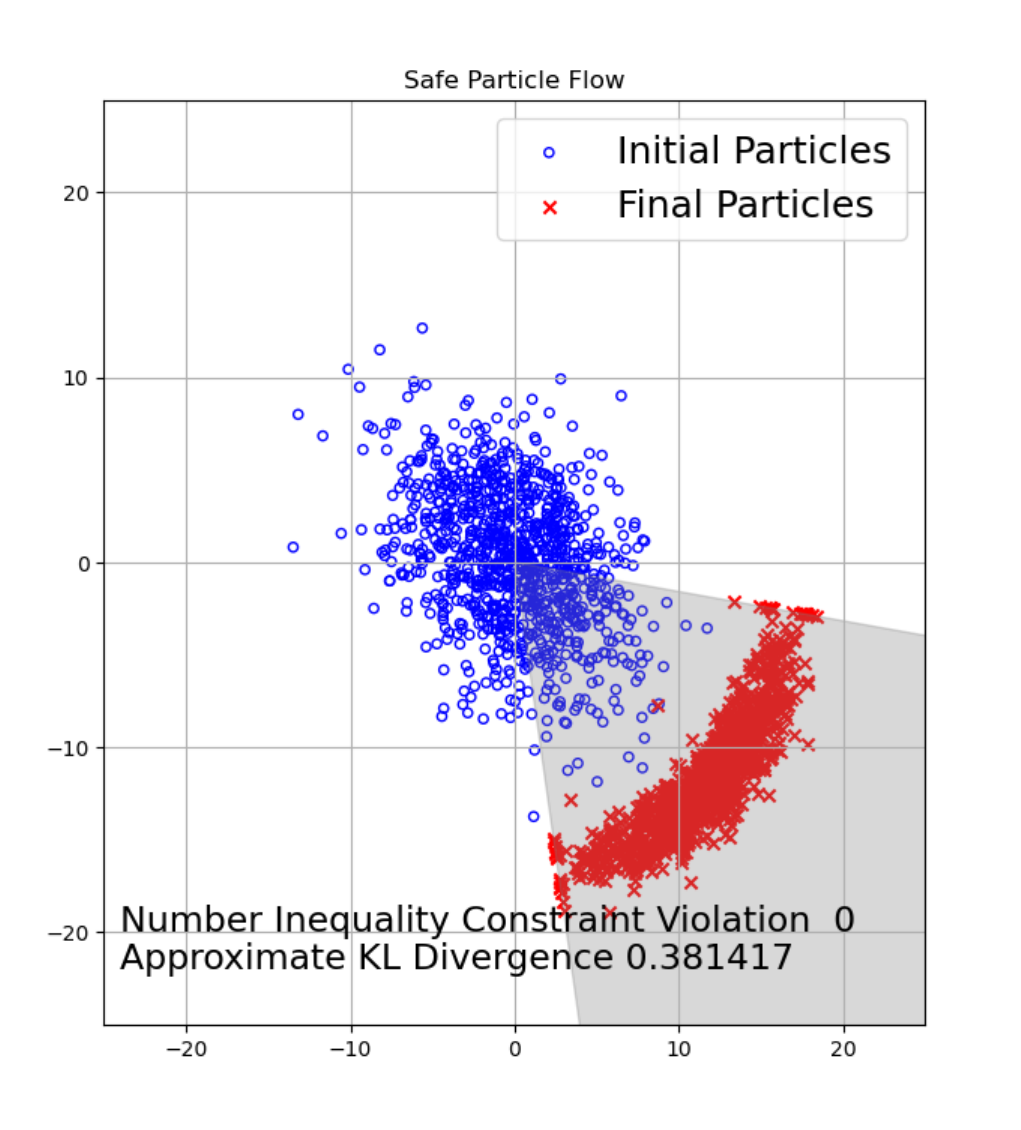}}%
    \caption{Comparison of safe PF with the projected PF \cite{craft2024nonlinear}, considering only the inequality constraint in \eqref{eval: constraints}. The initial and final particles are shown as blue dots and red crosses, respectively.}
    \label{fig: compare_ineq}
\end{minipage}
\end{figure*}

\section{Evaluation} 
\label{eva: sec_eva}

We evaluate the proposed safe particle flow method on a Bayesian estimation problem with one equality constraint and one inequality constraint. We compare the proposed method with the projection-based approaches \cite{craft2024nonlinear, power2024constrained}, and demonstrate that our method achieves better approximation accuracy while strictly satisfying the imposed constraints.

We consider the state space $\calX = \{ \bfx \in \bbR^2 \; | \; \| \bfx \|_{\infty} \leq 10^{3} \}$. The prior density is a truncated Gaussian supported on $\calX$ with $\hat{\bfx} = [0, 0]^{\top}$ and $P = \begin{bmatrix} 15 & -5 \\ -5 & 15 \end{bmatrix}$. The likelihood function is a Gaussian density $p(\bfz | \bfx) = p_{\calN}(\bfz; H(\bfx), R)$ with $H(\bfx) = \| \bfx \|$, $R = 1$, and $\bfx^* = [14.7, -10.1]^{\top}$, where $\bfx^*$ denotes the true value of $\bfx$ used to generate the observation. The following constraints are imposed on the Bayesian estimation problem:
\begin{equation} \label{eval: constraints}
\begin{split}
    g(\bfx) &= \pi / 5 - \arccos(\bfd^{\top} \bfx / \| \bfx \|_2) \geq 0 \\
    g^{e}(\bfx) &= \| \bfx \|^2 - r^2 = 0, 
\end{split}
\end{equation}
where $\bfd = \frac{1}{\sqrt{2}}[1, -1]^{\top}$ and $r=15.8$. The equality constraint enforces that the density is supported on a circle of radius $r$. The inequality constraint encodes a field-of-view restriction, requiring the density to lie within a cone centered along direction $\bfd$ with half-angle $\frac{\pi}{5}$. All methods are initialized with particles $\{\bfx_j(0)\}_{j=1}^{10^3}$ drawn from the prior. The desired particle drift is given by \eqref{sspf: stein_flow} with an RBF kernel of bandwidth $3$.

The results considering both equality and inequality are shown in Fig.~\ref{fig: safe_eq}. For reference, we also show in Fig.~\ref{fig: project_eq} the results obtained using the projected particle flow (PF) \cite{craft2024nonlinear} and in Fig.~\ref{fig: csvgd} the results of constrained Stein variational gradient descent (CSVGD) \cite{power2024constrained}. The projected PF satisfies the equality constraint but violates the inequality constraint. CSVGD achieves better inequality constraint satisfaction, as the particles violating the inequality constraint remain close to the safe region. Our safe PF satisfies both constraints and achieves a low KL divergence estimate. To exclude interference between equality and inequality constraints, we repeated the experiment with only the inequality constraint. The results are shown in Fig.~\ref{fig: compare_ineq}. The projected PF fails to satisfy the inequality constraint, as shown in Fig.~\ref{fig: project_ineq}. In contrast, our safe PF satisfies the inequality constraint while achieving good convergence, as shown in Fig.~\ref{fig: safe_ineq}. Our method has computational efficiency comparable to \cite{craft2024nonlinear}, while being significantly more efficient than \cite{power2024constrained}, as illustrated in Tab.~\ref{tab: runtime}.

\section{Conclusion}
We have introduced a novel safe particle flow method to satisfy constraints in VI problems. We have established that the constraints on the variational density can be equivalently reformulated as constraints on the particle drift. Combining ideas from safety control and the dynamical systems approach to algorithms, we have shown how to design a particle drift satisfying those constraints by solving a convex quadratic program. Our method proposed a simple yet efficient way to construct a safe particle flow while providing formal guarantees for constraint satisfaction for the variational density. Future work will focus on improving the efficiency of our method to enable real-time and high-dimensional applications, such as state estimation on manifolds and trajectory optimization.

    \bibliographystyle{cls/IEEEtran}
	\bibliography{ref/Safe_VI.bib}

@InProceedings{	  ames2019cbf,
  author	= {Ames, Aaron D. and Coogan, Samuel and Egerstedt, Magnus
		  and Notomista, Gennaro and Sreenath, Koushil and Tabuada,
		  Paulo},
  title		= {Control Barrier Functions: Theory and Applications},
  booktitle	= {European Control Conference (ECC)},
  year		= {2019},
  volume	= {},
  number	= {},
  pages		= {3420-3431},
  doi		= {10.23919/ECC.2019.8796030}
}

@Article{	  liu2016stein,
  title		= {{S}tein Variational Gradient Descent: A General Purpose
		  {B}ayesian Inference Algorithm},
  author	= {Liu, Qiang and Wang, Dilin},
  journal	= {{Advances in Neural Information Processing Systems}},
  volume	= {29},
  year		= {2016}
}

@Book{		  duistermaat_kolk_2004,
  title		= {Multidimensional Real Analysis II: Integration},
  publisher	= {Cambridge University Press},
  author	= {Duistermaat, J. J. and Kolk, J. A. C.},
  year		= {2004},
}

@Article{	  bayes1763,
  title		= {{LII.} {A}n {E}ssay towards {S}olving a {P}roblem in the
		  {D}octrine of {C}hances. {B}y the {L}ate {R}ev. {M}r.
		  {B}ayes, {F.R.S.} {C}ommunicated by {M}r. {P}rice, in a
		  {L}etter to {J}ohn {C}anton, {A.M.F.R.S.}},
  author	= {Bayes, Thomas},
  journal	= {Philosophical Transactions of the Royal Society of
		  London},
  volume	= {53},
  number	= {},
  pages		= {370--418},
  year		= {1763},
  publisher	= {The Royal Society London}
}

@InProceedings{	  wibisono2017information,
  title		= {Information and Estimation in {F}okker-{P}lanck
		  Channels},
  author	= {Wibisono, Andre and Jog, Varun and Loh, Po-Ling},
  booktitle	= {{IEEE} International Symposium on Information Theory},
  pages		= {2673--2677},
  year		= {2017}
}

@Book{		  bishop2006pattern,
  title		= {Pattern Recognition and Machine Learning},
  author	= {Bishop, Christopher M},
  year		= {2006},
  publisher	= {Springer}
}

@InProceedings{	  craft2024nonlinear,
  title		= {Nonlinear Particle Flow for Constrained {B}ayesian
		  Inference},
  author	= {Kyle J. Craft and Kyle J. DeMars},
  booktitle	= {AIAA Scitech Forum},
  pages		= {0428},
  year		= {2024}
}

@Article{	  kalman1960new,
  author	= {Kalman, Rudolph},
  title		= {A New Approach to Linear Filtering and
		  Prediction Problems},
  journal	= {{J}ournal of {B}asic {E}ngineering},
  volume	= {82},
  number	= {1},
  pages		= {35-45},
  year		= {1960}
}

@Book{		  anderson2005optimal,
  title		= {Optimal Filtering},
  author	= {Anderson, Brian DO and Moore, John B},
  year		= {2005},
  publisher	= {{C}ourier {C}orporation}
}

@Article{	  jordan1999introduction,
  title		= {An Introduction to Variational Methods for
		  Graphical Models},
  author	= {Jordan, Michael I and Ghahramani, Zoubin and Jaakkola,
		  Tommi S and Saul, Lawrence K},
  journal	= {{M}achine {L}earning},
  volume	= {37},
  pages		= {183--233},
  year		= {1999},
  publisher	= {{S}pringer}
}

@Article{	  khan2023bayesian,
  title		= {The {B}ayesian Learning Rule},
  author	= {Mohammad Emtiyaz Khan and H{\aa}vard Rue},
  journal	= {{J}ournal of {M}achine {L}earning {R}esearch},
  volume	= {24},
  number	= {281},
  pages		= {1--46},
  year		= {2023}
}

@InProceedings{	  geffner2023langevin,
  title		= {{L}angevin Diffusion Variational Inference},
  author	= {Geffner, Tomas and Domke, Justin},
  booktitle	= {International Conference on Artificial Intelligence and Statistics},
  pages		= {576--593},
  year		= {2023}
}

@article{ames2017clfcbfqp,
	author={Ames, Aaron D. and Xu, Xiangru and Grizzle, Jessy W. and Tabuada, Paulo},
	journal={IEEE Transactions on Automatic Control}, 
	title={Control Barrier Function Based Quadratic Programs for Safety Critical Systems}, 
	year={2017},
	volume={62},
	number={8},
	pages={3861-3876},
	doi={10.1109/TAC.2016.2638961}
}

@InProceedings{lee2019space,
author = {Taeyoung Lee},
title = {Spacecraft Attitude Estimation with a Single Magnetometer Using Matrix {F}isher Distributions on {SO}(3)},
booktitle = {AIAA Scitech Forum},
chapter = {},
pages = {},
doi = {10.2514/6.2019-1173},
year={2019},
}

@INPROCEEDINGS{heiden2022probabilistic,
  author={Heiden, Eric and Denniston, Christopher E. and Millard, David and Ramos, Fabio and Sukhatme, Gaurav S.},
  booktitle={International Conference on Robotics and Automation (ICRA)}, 

  title={Probabilistic Inference of Simulation Parameters via Parallel Differentiable Simulation}, 

  year={2022},

  pages={3638-3645}
}

@inproceedings{daum2007nonlinear,
  title={Nonlinear Filters with Log-Homotopy},
  author={Daum, Fred and Huang, Jim},
  booktitle={{SPIE} Signal and Data Processing of Small Targets},
  volume={6699},
  pages={423--437},
  year={2007},
}

@ARTICLE{jorege2024flow,
  author={Allibhoy, Ahmed and Cortés, Jorge},
  journal={IEEE Transactions on Automatic Control}, 
  title={Control-Barrier-Function-Based Design of Gradient Flows for Constrained Nonlinear Programming}, 
  year={2024},
  volume={69},
  number={6},
  pages={3499-3514},
  keywords={Optimization;Heuristic algorithms;Stability analysis;Asymptotic stability;Dynamical systems;Linear programming;Convergence;Gradient methods;Nonlinear systems;Control barrier functions;gradient flows;nonlinear programming;optimization;projected dynamical systems},
  doi={10.1109/TAC.2023.3306492}
}

@inproceedings{gordon1993novel,
  title={Novel Approach to Nonlinear/non-{G}aussian {B}ayesian State Estimation},
  author={Gordon, Neil J and Salmond, David J and Smith, Adrian FM},
  booktitle={{IEE} {P}roceedings {F} {R}adar and {S}ignal {P}rocessing},
  volume={140},
  pages={107--113},
  year={1993},
}

@article{kiss2023cbfal,
author = {Kiss, Adam K. and Molnar, Tamas G. and Ames, Aaron D. and Orosz, Gabor},
title = {Control Barrier Functionals: Safety-Critical Control for Time Delay Systems},
journal = {{International Journal of Robust and Nonlinear Control}},
volume = {33},
number = {12},
pages = {7282-7309},
doi = {https://doi.org/10.1002/rnc.6751},
year = {2023}
}

@incollection{neal2011mcmc,
  title={{MCMC} Using {H}amiltonian Dynamics},
  author={Neal, Radford},
  booktitle={Handbook of Markov Chain Monte Carlo},
  pages={113--162},
  year={2011},
  publisher={CRC Press}
}

@article{chen2023gradient,
  title={Gradient Flows for Sampling: Mean-Field Models, {G}aussian Approximations and Affine Invariance},
  author={Chen, Yifan and Huang, Daniel Zhengyu and Huang, Jiaoyang and Reich, Sebastian and Stuart, Andrew M},
  journal={ar{X}iv preprint ar{X}iv:2302.11024},
  year={2023}
}

@inproceedings{prajna2004safety,
  title={Safety Verification of Hybrid Systems using Barrier Certificates},
  author={Prajna, Stephen and Jadbabaie, Ali},
  booktitle={International Workshop on Hybrid Systems: Computation and Control},
  pages={477--492},
  year={2004},
  organization={Springer}
}

@article{blanchini1999set_invariance,
title = {Set Invariance in Control},
journal = {Automatica},
volume = {35},
number = {11},
pages = {1747-1767},
year = {1999},
doi = {https://doi.org/10.1016/S0005-1098(99)00113-2},
author = {Franco Blanchini}
}

@book{gardiner2009stochastic,
  title={Stochastic Methods: A Handbook for the Natural and Social Sciences},
  author={Gardiner Crispin},
  year={2009},
  publisher={Springer Berlin Heidelberg}
}

@article{note1919gronwall,
 author = {Thomas Hakon Gronwall},
 journal = {Annals of Mathematics},
 number = {4},
 pages = {292--296},
 publisher = {[Annals of Mathematics, Trustees of Princeton University on Behalf of the Annals of Mathematics, Mathematics Department, Princeton University]},
 title = {Note on the Derivatives with Respect to a Parameter of the Solutions of a System of Differential Equations},
 volume = {20},
 year = {1919}
}

@book{barfoot2017state,
author = {Barfoot, Timothy D.},
title = {State Estimation for Robotics},
year = {2017},
publisher = {Cambridge University Press}
}

@article{mukadam2018continuous,
  title={Continuous-Time {G}aussian Process Motion Planning via Probabilistic Inference},
  author={Mukadam, Mustafa and Dong, Jing and Yan, Xinyan and Dellaert, Frank and Boots, Byron},
  journal={{The International Journal of Robotics Research}},
  volume={37},
  number={11},
  pages={1319--1340},
  year={2018},
  publisher={SAGE Publications Sage UK: London, England}
}

@ARTICLE{power2024constrained,
  author={Power, Thomas and Berenson, Dmitry},
  journal={{IEEE Transactions on Robotics}}, 
  title={Constrained {S}tein Variational Trajectory Optimization}, 
  year={2024},
  volume={40},
  number={},
  pages={3602-3619}
}

@article{tabor2025csvd,
  title={Constrained {S}tein Variational Gradient Descent for Robot Perception, Planning, and Identification}, 
  author={Griffin Tabor and Tucker Hermans},
  year={2025},
  journal={ar{X}iv preprint ar{X}iv:2506.00589}
}

@article{zhang2022sampling,
  title={Sampling in Constrained Domains with Orthogonal-Space Variational Gradient Descent},
  author={Zhang, Ruqi and Liu, Qiang and Tong, Xin},
  journal={Advances in Neural Information Processing Systems},
  pages={37\kern-0.05em108--37\kern-0.05em120},
  year={2022}
}

@article{gurbuzbalaban2024penalized,
  title={Penalized Overdamped and Underdamped {L}angevin {M}onte {C}arlo Algorithms for Constrained Sampling},
  author={Gurbuzbalaban, Mert and Hu, Yuanhan and Zhu, Lingjiong},
  journal={Journal of Machine Learning Research},
  volume={25},
  number={263},
  pages={1--67},
  year={2024}
}

@article{chamon2024constrained,
  title={Constrained Sampling with Primal-Dual {L}angevin {M}onte {C}arlo},
  author={Chamon, Luiz F and Karimi, Mohammad R and Korba, Anna},
  journal={{Advances in Neural Information Processing Systems}},
  pages={29\kern-0.05em285--29\kern-0.05em323},
  year={2024}
}
    
\end{document}